\newtheorem{theorem}{Theorem}
\theoremstyle{plain}
\newtheorem{lemma}[theorem]{Lemma}
\newtheorem{proposition}[theorem]{Proposition}
\newtheorem{corollary}[theorem]{Corollary}
\theoremstyle{definition}
\newtheorem{definition}[theorem]{Definition}
\theoremstyle{remark}
\newtheorem{remark}[theorem]{Remark}
\newcommand{\R}{\mathbb{R}}
\newcommand{\Z}{\mathbb{Z}}
\newcommand{\F}{\mathbb{F}}
\begin{document}

\title{Bott Periodicity in the Hit Problem}

\author{Shaun V. Ault}

\address{Department of Mathematics and Computer Science\\ 
         Valdosta State University \\ 
         Valdosta, Georgia, 31698, USA.}

\email{svault@valdosta.edu}

\subjclass[2010]{55S10}

\keywords{Hit Problem, Steenrod Algebra, $\mathcal{A}(1)$-Modules,
  Hilbert series}

\begin{abstract}
  In this short note, we use Robert Bruner's
  $\mathcal{A}(1)$-resolution of $P = \F_2[t]$ to shed light on the
  Hit Problem.  In particular, the reduced syzygies $P_n$ of $P$ occur
  as direct summands of $\widetilde{P}^{\otimes n}$, where
  $\widetilde{P}$ is the augmentation ideal of the map $P \to \F_2$.
  The complement of $P_n$ in $\widetilde{P}^{\otimes n}$ is free, and
  the modules $P_n$ exhibit a type of ``Bott Periodicity'' of period
  $4$: $P_{n+4} = \Sigma^8P_n$.  These facts taken together allow one
  to analyze the module of indecomposables in $\widetilde{P}^{\otimes
    n}$, that is, to say something about the ``$\mathcal{A}(1)$-hit
  Problem.''  Our study is essentially in two parts: First, we expound
  on the approach to the Hit Problem begun by William Singer, in which
  we compare images of Steenrod Squares to certain kernels of Squares.
  Using this approach, the author discovered a nontrivial element in
  bidegree $(5, 9)$ that is neither $\mathcal{A}(1)$-hit nor in
  $\mathrm{ker} Sq^1 + \mathrm{ker} Sq^3$.  Such an element is
  extremely rare, but Bruner's result shows clearly why these elements
  exist and detects them in full generality.  Second, we describe the
  graded $\F_2$-space of $\mathcal{A}(1)$-hit elements of
  $\widetilde{P}^{\otimes n}$ by determining its Hilbert series.
\end{abstract}

\maketitle

%%%%%%%%%%%%%%%%%%%%%%%%%%%%%%%%%%%%%%%%%%%%%%%%%%%%%%%%%%%%%%%%%%%%%%%%%%%%%%%%
\section{Introduction and Definitions}\label{sec.intro}
%%%%%%%%%%%%%%%%%%%%%%%%%%%%%%%%%%%%%%%%%%%%%%%%%%%%%%%%%%%%%%%%%%%%%%%%%%%%%%%%

The classical ``Hit Problem'' is concerned with finding a minimal
basis of
\[
  \F_2[x_1, x_2, \ldots, x_n] = P^{\otimes n} = H^*((\R
  P^{\infty})^{\times n}; \F_2)
\]
as a left $\mathcal{A}$-module, where $\mathcal{A}$ is the Steenrod
algebra at the prime $2$.  Indeed, we could ask about a ``Hit
Problem'' in any left $\mathcal{A}$-module $M$.
\begin{definition}
  Define $\mathcal{I}_M = \mathcal{A}^+M$.  
\end{definition}
We are primarily concerned with finding an $\F_2$-basis of the space
of indecomposables, or {\it unhit elements}, $M/\mathcal{I}_M$.  Since
$\mathcal{A}$ is generated as an algebra by the $2$-power squares,
$\{Sq^{2^i}\}_{i \geq 0}$, we may write:
\[
  \mathcal{I}_M = \mathrm{im} Sq^1 + \mathrm{im} Sq^2 + \mathrm{im}
  Sq^4 + \mathrm{im} Sq^8 + \cdots
\]
Consider the sub-Hopf algebra $\mathcal{A}(k)$ of $\mathcal{A}$
generated by $\{Sq^{2^i}\}_{i \leq k}$, and define:
\begin{definition}\label{def.I}
  For each integer $k \geq 0$,
  \[
    \mathcal{I}_M(k) = \mathcal{A}(k)^+M = \sum_{i=0}^{k} \mathrm{im}
    \left(Sq^{2^i} : M^{*-2^i} \to M^*\right).
  \]
\end{definition}
There is a filtration,
\[
  \mathcal{I}_M(0) \subseteq \mathcal{I}_M(1) \subseteq
  \mathcal{I}_M(2) \subseteq \mathcal{I}_M(3) \subseteq \cdots
  \subseteq \mathcal{I}_M.
\]
If an element $x \in M$ is hit, then it is certainly hit by $\theta
\in \mathcal{A}(k)$ for some $k \geq 0$.  Thus, the Hit Problem can be
studied by analyzing $M/\mathcal{I}_M(k)$, which is a more tractable
problem.  There is a strong relationship between $\mathcal{I}_M(k)$
and the kernels of ``spike squares,'' as defined below.
\begin{definition}\label{def.D}
  For each integer $k \geq 0$:
  \[
    \mathcal{D}_M(k) = \sum_{i=0}^{k} \mathrm{ker}
    \left(Sq^{2^{i+1}-1} : M^{*} \to M^{*+2^{i+i} - 1}\right).
  \]
\end{definition}
We call whole numbers that are one less than a power of 2 {\it
  spikes}, so the squares $Sq^{2^{i+1}-1}$ are often referred to as
{\it spike squares}.  Note, there is a similar filtration,
\[
  \mathcal{D}_M(0) \subseteq \mathcal{D}_M(1) \subseteq
  \mathcal{D}_M(2) \subseteq \mathcal{D}_M(3) \subseteq \cdots
\]
\begin{proposition}\label{prp.inclusion}
  For each $k \geq 0$,
  \begin{equation}\label{eqn.inclusion}
    \mathcal{I}_M(k) \subseteq \mathcal{D}_M(k).
  \end{equation}
\end{proposition}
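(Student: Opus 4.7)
The plan is to reduce the claim to a single Steenrod-algebra identity: for each $0 \leq j \leq k$, the composite $Sq^{2^{j+1}-1} Sq^{2^j}$ is zero in $\mathcal{A}$. If this is established, then for any $y = Sq^{2^j}(x) \in \mathrm{im}(Sq^{2^j})$ one has
\[
  Sq^{2^{j+1}-1}(y) = \bigl(Sq^{2^{j+1}-1} Sq^{2^j}\bigr)(x) = 0,
\]
so $y \in \mathrm{ker}(Sq^{2^{j+1}-1}) \subseteq \mathcal{D}_M(k)$. Summing over the generators $0 \leq j \leq k$ of $\mathcal{I}_M(k)$ gives the desired inclusion.

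To prove the identity $Sq^{2^{j+1}-1} Sq^{2^j} = 0$, I would invoke an Adem relation. Since $2^{j+1} - 1 < 2 \cdot 2^j$, the pair $(2^{j+1}-1, 2^j)$ is inadmissible and
\[
  Sq^{2^{j+1}-1} Sq^{2^j} = \sum_{t=0}^{2^{j}-1} \binom{2^{j} - t - 1}{2^{j+1} - 1 - 2t}\, Sq^{3\cdot 2^{j} - 1 - t} Sq^{t}.
\]
The binomial coefficient $\binom{2^{j}-t-1}{2^{j+1}-1-2t}$ is zero (even as an integer) whenever the top is strictly less than the bottom, i.e., whenever $2^{j} - t - 1 < 2^{j+1} - 1 - 2t$, which rearranges to $t < 2^{j}$. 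Since the summation index runs only over $0 \leq t \leq 2^{j} - 1$, every coefficient in the expansion vanishes, so the composite is zero.

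The main (minor) obstacle is just the binomial coefficient bookkeeping; here it is transparent because the inequality is strict throughout the range. An alternative would be to quote that $Sq^{2^{j+1}-1} Sq^{2^j}$ is decomposable in admissible form with all coefficients vanishing mod $2$, but the direct Adem computation above seems to be the cleanest path.
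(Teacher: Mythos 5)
Your proof is correct and follows the same route as the paper, which simply cites the relation $Sq^{2m-1}Sq^m = 0$ (your Adem computation is exactly the verification of this relation for $m = 2^j$, and the termwise inclusion $\mathrm{im}\,Sq^{2^j} \subseteq \ker Sq^{2^{j+1}-1}$ summed over $j$ is the intended argument).
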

\begin{proof}
  The inclusion~(\ref{eqn.inclusion}) is an easy consequence of the
  relation $Sq^{2m-1}Sq^m = 0$ in $\mathcal{A}$.
\end{proof}
What is surprising is that when $M = \F_2[x_1, x_2, \ldots, x_n]$, the
inclusion~(\ref{eqn.inclusion}) is almost an equality, in a sense that
will be made precise below.  Thus, the study of kernels of spike
squares sheds light on the Hit Problem.
\begin{remark}
  The author learned about this relationship between images and
  kernels from William Singer, and significant progress was made along
  these lines by Singer and the author~\cite{AS, A2}.  Singer's
  approach to the Hit Problem, by comparing images and kernels of
  Steenrod squares, to our knowledge, has not been considered
  elsewhere.
\end{remark}

Clearly, the inclusion $\mathcal{I}_M(k) \subseteq \mathcal{D}_M(k)
\subseteq M$ is as graded $\F_2$-vector spaces.  We define the
quotient:
\begin{definition}\label{def.U}
  \[
    \mathcal{U}_M(k) = \mathcal{D}_M(k) / \mathcal{I}_M(k).
  \]
\end{definition}
Then $\mathcal{U}_M(k)$ measures how far~(\ref{eqn.inclusion})
deviates from an equality.  This paper is primarily concerned with the
structure of $\mathcal{U}_M(1)$ when $M = \widetilde{P}^{\otimes n}$,
where $\widetilde{P}$ is the augmentation ideal of the map $P \to
\F_2$, so that $\widetilde{P}^{\otimes n} \cong \widetilde{H}^*((\R
P^{\infty})^{\wedge n}; \F_2)$.  We define the bigraded space,
\begin{definition}
  \[
    \mathbf{P} = \{ \mathbf{P}^{n,d} \}_{n \geq 1, d \geq 0}, \quad
    \textrm{where} \quad \mathbf{P}^{n,d} = \widetilde{H}^d((\R
    P^{\infty})^{\wedge n}; \F_2).
  \]
  Note, for all $k \geq 0$, the spaces $\mathcal{I}_M(k)$,
  $\mathcal{D}_M(k)$, and $\mathcal{U}_M(k)$ inherit this bigrading,
  and by abuse of notation, we identify these with their associated
  bigraded spaces:
  \begin{eqnarray*}
    \mathcal{I}_M(k) &=& \{ \mathcal{I}_M(k)^{n,d} \}_{n \geq 1, d
      \geq 0}\\ \mathcal{D}_M(k) &=& \{ \mathcal{D}_M(k)^{n,d} \}_{n
      \geq 1, d \geq 0}\\ \mathcal{U}_M(k) &=& \{
    \mathcal{U}_M(k)^{n,d} \}_{n \geq 1, d \geq 0}
  \end{eqnarray*}
\end{definition}
It is easy to see that $\mathcal{U}_{\mathbf{P}}(0) = 0$~\cite{AS},
but the structure of $\mathcal{U}_{\mathbf{P}}(1)$ is more delicate.
First, we observe that if the cohomological degree of $x \in M$ is
less than $4$, then $x$ may be in the kernel of $Sq^3$ with no chance
of being in the image of $Sq^2$.  For this reason, we call elements of
$\mathcal{U}_{\mathbf{P}}(k)$ in cohomological degrees $d < 2^{k+1}$
{\it degenerate elements}.  In preliminary work, we found that
$\mathcal{U}_{\mathbf{P}}(1)$ was devoid of non-degenerate elements in
all low bidgrees that could easily be checked by hand or computer.
However, further analysis and computation finally located a nontrivial
element of $\mathcal{U}_{\mathbf{P}}(1)$ in bidegree $(5,
9)$~\cite{A2}.  (Additional computer calculations were not made in
higher bidegrees after this discovery.)
\begin{remark}
  The author was working on the dual problem, as explained in
  Section~\ref{sec.dual}, and it is straightforward to show that the
  dual object $U_{\widetilde{\Gamma}}(k)$ is isomorphic to
  $\mathcal{U}_{\mathbf{P}}(k)$ as bigraded space.
\end{remark}

The reason that a nontrivial element exists in
$\mathcal{U}_{\mathbf{P}}(1)^{5,9}$ and that such nontrivial elements
are not found in lower bidegrees remained a mystery.  In a recent
email communication, Geoffrey Powell suggested that I look into a
preprint of Robert Bruner~\cite{Bruner}, which details the
$\mathcal{A}(1)$-module structure of $P^{\otimes n}$.  Bruner showed
that the there is an $\mathcal{A}(1)$-isomorphism:
\begin{equation}\label{eqn.Bruner_decomp}
  \widetilde{P}^{\otimes n} \cong P_n \oplus F_n,
\end{equation}
where $P_n$ is indecomposable and $F_n$ is a free
$\mathcal{A}(1)$-module.  Moreover, the modules $P_n$ exhibit a type
of ``Bott Periodicity'' of period 4:
\[
  P_{n+4} = \Sigma^8 P_n.
\]
Due to the periodic nature of $P_n$, the analysis of
$\mathcal{U}_{\mathbf{P}}(1)$ can be carried out in full.  This is the
main result of the paper, which is summarized in
Theorem~\ref{thm.hit_problem_A(1)}.  However, we would first like to
motivate the study of $\mathcal{U}_M(k)$ in general by recalling our
work on the dual ``$\mathcal{A}^+$-Annihilated Problem''.

I am grateful to Geoffrey Powell, Bob Bruner, and William Singer for
helpful conversations and advice.

%%%%%%%%%%%%%%%%%%%%%%%%%%%%%%%%%%%%%%%%%%%%%%%%%%%%%%%%%%%%%%%%%%%%%%%%%%%%%%%%
\section{The Dual Problem}\label{sec.dual}
%%%%%%%%%%%%%%%%%%%%%%%%%%%%%%%%%%%%%%%%%%%%%%%%%%%%%%%%%%%%%%%%%%%%%%%%%%%%%%%%

Suppose an $\mathcal{A}$-module $M$ is of finite type as graded $\F_2$
space.  Then its graded vector space dual $\#M$ is a right
$\mathcal{A}$-module which is degree-wise isomorphic to $M$.
\begin{remark}
  We avoid using the notation $M^*$ for dual of $M$, as the
  superscript position is better suited for degree.  In the
  literature, it seems that there are a number of competing notations
  for this basic concept, but I prefer the notation $\#M$ found on
  {\it nLab} (see
  \url{http://ncatlab.org/nlab/show/graded+vector+space}).  It is
  simple and does not get in the way of degree notations.
\end{remark}
We are particularly interested in the bigraded space,
$\widetilde{\Gamma} = \#\mathbf{P}_+$, whose components are given by:
\[
  \widetilde{\Gamma}_{n,d} = \left\{\begin{array}{ll}
  \widetilde{H}_d(S^0; \F_2), & n=0, \\ \widetilde{H}_d((\R
  P^{\infty})^{\wedge n}; \F_2), & n \geq 1.\end{array}\right.
\]
There is a unital asssociative connected bigraded algebra structure on
$\widetilde{\Gamma}$ induced by the natural mappings,
\[
  (\R P^{\infty})^{\wedge n_1} \times (\R P^{\infty})^{\wedge n_2} \to
  (\R P^{\infty})^{\wedge (n_1+n_2)},
\]
and unit map $\F_2 \stackrel{\cong}{\to} \widetilde{\Gamma}_{0,0}$.
This algebra structure provides an advantage to working in
$\widetilde{\Gamma}$ rather than in $\mathbf{P}$.  Indeed, David Anick
showed that $\widetilde{\Gamma}$ is a free (tensor) algebra~\cite{A1}.
So it makes sense to consider the dual question: {\it Find a basis of
  the space:}
\[
  \widetilde{\Gamma}_{\mathcal{A}^+} = \{ x \in \widetilde{\Gamma}
  \;|\; (x)Sq^n = 0\; \textrm{for all $n > 0$}\}.
\]
The correct analogs of
Defitions~\ref{def.I},~\ref{def.D}, and~\ref{def.U}
are obtained by dualizing the third column of the commutative
diagram below:

\begin{center}
  \begin{equation}\label{dia.hit_problem}
    \begin{tikzcd}
      & \mathcal{U}_M(k) & 0 & 0 & \\ 0 \rar & \mathcal{D}_M(k) \rar
      \uar & M \rar \uar & M/\mathcal{D}_M(k) \rar \uar & 0 \\ 0 \rar
      & \mathcal{I}_M(k) \rar \uar & M \rar \arrow[equals]{u} &
      M/\mathcal{I}_M(k) \rar \uar{\alpha} & 0 \\ & 0 \uar & 0 \uar &
      \mathrm{ker}(\alpha) \uar &\\
    \end{tikzcd}
  \end{equation}
\end{center}
Recall the definitions of~\cite{AS, A2}: For any right
$\mathcal{A}$-module $R$,
\begin{eqnarray*}
  I_R(k) &=& \bigcap_{i=0}^{k} \mathrm{im} \left(Sq^{2^{i+1}-1} :
  R_{*+2^{i+i} - 1} \to R_*\right)\\ \Delta_R(k) &=& \bigcap_{i = 0}^k
  \mathrm{ker}\left(Sq^{2^i} : R_* \to R_{*-2^i}\right)\\
  U_R(k) &=& \Delta_R(k)/I_R(k).
\end{eqnarray*}
Then upon dualizing the third column of~(\ref{dia.hit_problem}), we
obtain the exact sequence:
\begin{center}
\begin{tikzcd}
  0 \to I_{\#M}(k) \to \Delta_{\#M}(k) \to U_{\#M}(k) \to 0
\end{tikzcd}
\end{center}
Furthermore, there is a duality isomorphism,
\[
  U_{\#M(k)} \cong \mathrm{ker}(\alpha) \cong \mathcal{U}_M(k).
\]
For convenience, when considering $\widetilde{\Gamma}$, the notation
is abbreviated $I(k)$, $\Delta(k)$, and $U(k)$.  Each of these has a
bigrading inherited from $\widetilde{\Gamma}$.  The space of partially
$\mathcal{A}^+$-annihilateds, $\Delta(k)$, is in fact a free
subalgebra of $\widetilde{\Gamma}$ (see~\cite{AS}), and $I(k)$ is a
two-sided ideal of $\Delta(k)$, making $U(k)$ an algebra
(see~\cite{A2}, though the fact that $I(k)$ is a two-sided ideal was
known to Singer before the author included it in his publication).  In
our study of the $\mathcal{A}^+$-annihilated problem, we recognized
that finding elements of kernels ({\it i.e.,} $\Delta(k)$) may be a
more difficult task than finding elements of images ({\it i.e.,}
$I(k)$), and so we consider the following two problems:
\begin{enumerate}[I.]
  \item Identify the elements of $I(k)$ by giving a basis as
    space or algebra.
  \item Identify the elements of $U(k)$.
\end{enumerate}
In what follows, we return to the cohomological side of the Hit
Problem.

%%%%%%%%%%%%%%%%%%%%%%%%%%%%%%%%%%%%%%%%%%%%%%%%%%%%%%%%%%%%%%%%%%%%%%%%%%%%%%%%
\section{Reduced $\mathcal{A}(k)$-modules}
%%%%%%%%%%%%%%%%%%%%%%%%%%%%%%%%%%%%%%%%%%%%%%%%%%%%%%%%%%%%%%%%%%%%%%%%%%%%%%%%

We will be interested in left
$B$-modules, where $B$ is $\mathcal{A}$ or $\mathcal{A}(k)$.  Let
$\mathcal{M}^f$ be the category of $\Z$-graded left $B$-modules that
are trivial in sufficiently low degree, together with
degree-preserving $B$-maps.  Note, since we do not specialize to
unstable modules, the notions of {\it free} module and the loop
functor $\Omega$ are not the usual unstable versions.  Indeed, we have
$F$ free in $\mathcal{M}^f$ if and only $F \cong \bigoplus_{n_i\in
  \mathscr{I}} \Sigma^{n_i} B$, for some sequence $\mathscr{I}$ of
integers.  Let $I$ be the augmentation ideal of the counit $B \to
\F_2$.  We define the loop functor on $M \in \mathcal{M}^f$ by $\Omega
M = I \otimes M$.

We shall need the following result of Margolis (see Thm.~11.21 and
Prop.~13.13 of~\cite{Margolis}):
\begin{proposition}[Margolis]
  If $B$ is a bounded-below connected algebra of finite type, then
  every $B$-module $M$ has an expression $M \cong F \oplus
  M^{\mathrm{red}}$, where $F$ is free and $M^{\mathrm{red}}$ has no
  free summands.  Moreover, if $M \cong F' \oplus M'$ where $F'$ is
  free and $M'$ has no free summands, then $F' \cong F$ and $M' \cong
  M^{red}$.
\end{proposition}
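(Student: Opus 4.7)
The plan is to give a Krull--Schmidt style argument: establish existence by extracting a maximal free direct summand, and establish uniqueness using a homological invariant that isolates the free part from the reduced part.

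For existence, I would consider the poset $\mathcal{F}$ of pairs $(F, C)$ of submodules of $M$ with $F$ free and $M = F \oplus C$, ordered by declaring $(F_1, C_1) \leq (F_2, C_2)$ iff $F_1$ is a direct summand of $F_2$ whose complement lies inside $C_1$. Because $M$ is of finite type and bounded below, ascending chains in $\mathcal{F}$ stabilize in each degree, so Zorn's lemma produces a maximal element $(F, M^{\mathrm{red}})$. If $M^{\mathrm{red}}$ admitted a nonzero free summand, enlarging $F$ by adjoining that summand would contradict maximality, so $M^{\mathrm{red}}$ has no free summands, as required.

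For uniqueness, suppose $M \cong F \oplus M^{\mathrm{red}} \cong F' \oplus M'$ with $F, F'$ free and both $M^{\mathrm{red}}$ and $M'$ reduced. The key invariant I would use is $\mathrm{Tor}_*^B(\F_2, -)$: a shifted free module $\Sigma^d B$ has $\mathrm{Tor}_0^B(\F_2, \Sigma^d B) \cong \Sigma^d \F_2$ with vanishing higher Tor, whereas for a reduced module every element of $\mathrm{Tor}_0$ contributes nontrivially to $\mathrm{Tor}_1$, because the differentials in any minimal free resolution of a reduced module land in the augmentation ideal $I$. This cleanly separates the contributions of $F$ and $M^{\mathrm{red}}$ to the Hilbert series of $\mathrm{Tor}_0^B(\F_2, M)$ versus $\mathrm{Tor}_1^B(\F_2, M)$, pinning down the graded rank of $F$ and forcing $F \cong F'$. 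Cancellation then gives $M^{\mathrm{red}} \cong M'$, invoking the fact that in the bounded-below finite type setting every indecomposable has a local graded endomorphism ring, so classical Krull--Schmidt applies.

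The step I expect to be most delicate is the uniqueness, specifically converting the homological detection of the free rank into an honest graded isomorphism $F \cong F'$ and then safely executing cancellation. The set-theoretic obstacles are tamed by finite type, but one must verify carefully that a reduced module's minimal resolution really does have differentials strictly inside the augmentation ideal (a Nakayama-style argument in $\mathcal{M}^f$), and that cancellation of a free summand is legitimate in this graded category; the existence half, by contrast, feels essentially formal once Zorn's lemma is applied.
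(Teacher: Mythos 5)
The paper offers no proof of this statement---it is quoted directly from Margolis \cite{Margolis} (Thm.~11.21, Prop.~13.13)---so your argument has to stand on its own. The existence half is essentially the standard Zorn's lemma argument and is sound in outline, with two repairs needed: your relation on pairs $(F,C)$ should also demand $C_2\subseteq C_1$ (as stated it is not antisymmetric, and along a chain the $C_\alpha$ in a fixed degree need not be nested, so $\bigcap C_\alpha$ can fail to be a complement of $\bigcup F_\alpha$); and to see that $\bigcup F_\alpha$ is free you need the fact that a bounded-below direct summand of a free module over a connected algebra of finite type is free.

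The uniqueness half has a genuine gap. From $M\cong F\oplus N$ with $N$ reduced one gets $\mathrm{Tor}_0^B(\F_2,M)\cong F/IF\oplus N/IN$ and $\mathrm{Tor}_{\geq 1}^B(\F_2,M)\cong\mathrm{Tor}_{\geq 1}^B(\F_2,N)$, but this does \emph{not} pin down the graded rank of $F$: there is no map $\mathrm{Tor}_0\to\mathrm{Tor}_1$, so ``every element of $\mathrm{Tor}_0$ contributes nontrivially to $\mathrm{Tor}_1$'' is not a statement one can use, and your justification (minimal differentials land in $I$) holds for minimal resolutions of \emph{all} modules, so it cannot isolate the reduced part; in general there is no relation between the Hilbert series of $\mathrm{Tor}_0(N)$ and $\mathrm{Tor}_1(N)$ that recovers $\dim(N/IN)$. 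The appeal to classical Krull--Schmidt for the final cancellation is also unavailable, since $M^{\mathrm{red}}$ is typically infinite-dimensional and need not decompose into summands with local endomorphism rings. The standard way through is elementary: show that if $M=F\oplus C$ with $C$ reduced and $x\in M^d$ generates a free cyclic direct summand $Bx\cong\Sigma^dB$ with splitting $\sigma\colon M\to\Sigma^dB$, then the $F$-component of $x$ is nonzero mod $IF$ --- otherwise $\sigma(IF)^d\subseteq (I\Sigma^dB)^d=0$ forces $\sigma|_C$ to hit the generator, hence to be a split surjection onto the projective module $\Sigma^dB$, contradicting reducedness of $C$. Applied to generators of $F'$, this shows the composite $F'\hookrightarrow M\to F$ is injective on $-/I-$; by symmetry and finite type it is an isomorphism on generators, hence an isomorphism $F'\to F$ of bounded-below free modules by graded Nakayama, whence $M=F'\oplus C$ and $M'\cong M/F'\cong C=M^{\mathrm{red}}$.
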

\begin{remark}
  We call the module $M^{\mathrm{red}}$ the {\it reduced} part of $M$.
  The assignment $M \mapsto M^{\mathrm{red}}$ is not functorial.
\end{remark}
The next definition is of {\it stable isomorphism}, which is defined
for general modules elsewhere~\cite{Margolis, Bruner}, but for our
purposes can be defined more simply for bounded-below graded modules
as follows:
\begin{definition}
  Let $M$ and $N$ be modules in $\mathcal{M}^f$.  Then $M$ and $N$ are
  {\it stably isomorphic}, denoted $M \simeq N$, if $M^{\mathrm{red}}
  \cong N^{\mathrm{red}}$.
\end{definition}

We are particularly interested in $\mathcal{A}(1)$-modules.  Certainly
any $\mathcal{A}$-module is also an $\mathcal{A}(k)$-module for any
$k$ by restriction of scalars.
\begin{proposition}[Bruner~\cite{Bruner}, Cor.~3.3]
  For $n \geq 0$, there is a stable isomorphism of
  $\mathcal{A}(1)$-modules,
  \[
    \widetilde{P}^{\otimes (n+1)} \simeq
    \Omega^n\Sigma^{-n}\widetilde{P}.
  \]
\end{proposition}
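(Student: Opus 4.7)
The plan is to induct on $n$. The base case $n = 0$ is immediate, reducing to $\widetilde{P} \simeq \widetilde{P}$. For the inductive step, I would aim to establish the recursion
$$\widetilde{P}^{\otimes (n+1)} \simeq \Omega\,\Sigma^{-1}\,\widetilde{P}^{\otimes n};$$
iterating this relation and invoking the inductive hypothesis $\widetilde{P}^{\otimes n} \simeq \Omega^{n-1}\Sigma^{-(n-1)}\widetilde{P}$ (together with the fact that $\Omega$ and $\Sigma$ preserve stable isomorphism on $\mathcal{M}^f$) then yields the conclusion.

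To establish the recursion, it suffices to exhibit a short exact sequence of $\mathcal{A}(1)$-modules of the form
$$0 \to \Sigma\,\widetilde{P}^{\otimes(n+1)} \longrightarrow F \longrightarrow \widetilde{P}^{\otimes n} \to 0$$
with $F$ stably free, since any such sequence realizes $\Sigma\,\widetilde{P}^{\otimes(n+1)}$ as a first syzygy of $\widetilde{P}^{\otimes n}$, hence $\Sigma\,\widetilde{P}^{\otimes(n+1)} \simeq \Omega\,\widetilde{P}^{\otimes n}$. The naive candidate---tensoring the $\mathcal{A}(1)$-sequence $0 \to \widetilde{P} \to P \to \F_2 \to 0$ with $\widetilde{P}^{\otimes n}$---is fruitless, since that sequence splits as $\mathcal{A}(1)$-modules: the constant-term projection $P \to \F_2$ is $\mathcal{A}(1)$-equivariant (positive Steenrod operations vanish on degree-zero classes) and admits the obvious section $1 \mapsto 1$. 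A more delicate construction is needed.

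The main obstacle, and the essential input from Bruner, is an explicit minimal free $\mathcal{A}(1)$-resolution of $\widetilde{P}$ whose generators sit in the spike degrees $2^k - 1$---precisely the degrees of a minimal $\mathcal{A}(1)$-generating set for $\widetilde{P}$. The crux of the argument is to identify the $n$th syzygy of this resolution with $\widetilde{P}^{\otimes(n+1)}$, up to a global suspension shift of $\Sigma^{-n}$ and free summands. Concretely, I would construct an explicit $\mathcal{A}(1)$-map from a suitable suspension of $\widetilde{P}^{\otimes(n+1)}$ into the syzygy term and verify that both cokernel and residual kernel are free---bookkeeping made tractable by Bruner's decomposition $\widetilde{P}^{\otimes n} \cong P_n \oplus F_n$ and ultimately tied to the eventual Bott-type periodicity $P_{n+4} = \Sigma^8 P_n$.
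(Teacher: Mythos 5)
First, a point of reference: the paper gives no proof of this proposition at all --- it is quoted verbatim as Corollary~3.3 of Bruner's preprint --- so your sketch can only be judged on its own merits. Your outer scaffolding is sound and matches the standard route: induct on $n$, reduce to the recursion $\widetilde{P}^{\otimes(n+1)} \simeq \Omega\Sigma^{-1}\widetilde{P}^{\otimes n}$, and obtain that recursion from a short exact sequence $0 \to \Sigma\widetilde{P}^{\otimes(n+1)} \to F \to \widetilde{P}^{\otimes n} \to 0$ with $F$ stably free (Schanuel). You are also right that tensoring $0 \to \widetilde{P} \to P \to \F_2 \to 0$ with $\widetilde{P}^{\otimes n}$ is useless, both because it splits and because it produces neither the suspension nor a free middle term.

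The gap is that the one step carrying all the content --- actually exhibiting $F$ and the two maps --- is never performed, and the route you propose for it is circular. The decomposition $\widetilde{P}^{\otimes n} \cong P_n \oplus F_n$ and the periodicity $P_{n+4} = \Sigma^8 P_n$ are consequences of the proposition being proved: in both Bruner's paper and this one, $P_{n+1}$ is \emph{defined} as $(\Omega^n\Sigma^{-n}\widetilde{P})^{\mathrm{red}}$, so the identification of that module with the reduced part of $\widetilde{P}^{\otimes(n+1)}$ is exactly Corollary~3.3 plus Margolis's theorem; you cannot use it as bookkeeping input. Your claim that the minimal $\mathcal{A}(1)$-generators of $\widetilde{P}$ sit in spike degrees $2^k-1$ is also false (that is true over all of $\mathcal{A}$; over $\mathcal{A}(1)$ the generators sit in degrees $1, 3, 7, 11, 15, \dots$, as one reads off from $F'_1 \cong \Sigma\mathcal{A}(1) \oplus \bigoplus_{i\ge0}\Sigma^{4i+3}\mathcal{A}(1)$ in the paper). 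What is actually needed is a single explicit construction for $n=1$, namely a short exact sequence $0 \to \Sigma\widetilde{P}^{\otimes 2} \to F \to \widetilde{P} \to 0$ with $F$ free; since stable equivalences are preserved under $-\otimes\widetilde{P}^{\otimes(n-1)}$ (a free module tensored with anything is free over a Hopf algebra), the general recursion follows from this one case. That construction rests on concrete structural facts --- e.g.\ that $\widetilde{P}$ is free over $\mathcal{A}(0)$, so that modules of the form $\mathcal{A}(1)\otimes_{\mathcal{A}(0)}\widetilde{P}\otimes(-)$ are $\mathcal{A}(1)$-free and can serve as $F$ --- together with an explicit identification of the kernel with $\Sigma\widetilde{P}^{\otimes 2}$ modulo free summands. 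None of this appears in your sketch, so as written it is a plan rather than a proof.
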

\begin{definition}
  Define for each $n \in \Z$,
  \[
    P_{n+1} =
    \left(\Omega^n\Sigma^{-n}\widetilde{P}\right)^{\mathrm{red}}.
  \]
\end{definition}
The following result is a paraphrase of Bruner~\cite{Bruner},
Thm.~4.3.
\begin{proposition}\label{prp.P_n_structure}
  The modules $P_n$ are indecomposable and are determined by the
  following diagrams.  The arrows indicate $Sq^1$ and $Sq^2$.  An
  element's subscript identifies its degree.
  \begin{itemize}
    \item $P_1 = \widetilde{P}$:
      \[
        \begin{tikzcd}[column sep=small]
          t_1 \rar & t_2 \arrow[bend left]{rr} & t_3 \rar \arrow[bend
            left]{rr} & t_4 & t_5 \rar & t_6 \arrow[bend left]{rr} &
          t_7 \rar \arrow[dotted,bend left]{rr} & t_8 & {}
        \end{tikzcd}
      \]
    \item $P_2$:
      \[
        \begin{tikzcd}[column sep=small]
          & u_3 \rar \arrow[bend left]{rr} & u_4 \arrow{rrd} & u_5
          \rar & u_6 \arrow[bend left]{rr} & u_7 \rar \arrow[bend
            left]{rr} & u_8 & u_9 \rar[dotted] & {} \\ u'_2 \rar
          \arrow{rru} & u'_3 \arrow[bend left]{rr} & & u'_5 \rar &
          u'_6
        \end{tikzcd}
      \]
    \item $P_3$:
      \[
        \begin{tikzcd}[column sep=small]
          &&& v_6 \arrow[bend left]{rr} & v_7 \rar \arrow[bend
            left]{rr} & v_8 & v_9 \rar & v_{10} \arrow[bend left]{rr}
          & v_{11} \arrow[dotted,bend left]{rr} \rar & v_{12} & {}
          \\ v'_3 \rar \arrow[bend left]{rr} & v'_4 \arrow[bend
            right]{rr} & v'_5 \arrow{ru} \arrow[bend left]{rr} & v'_6
          \rar & v'_7
        \end{tikzcd}
      \]
    \item $P_4$:
      \[
        \begin{tikzcd}[column sep=small]
          w_7 \rar \arrow[bend left]{rr} & w_8 & w_9 \rar & w_{10}
          \arrow[bend left]{rr} & w_{11} \rar \arrow[bend left]{rr} &
          w_{12} & w_{13} \rar[dotted] & {}
        \end{tikzcd}
      \]
    \item $P_{n+4} = \Sigma^8P_n$, for $n \geq 1$.
  \end{itemize}
\end{proposition}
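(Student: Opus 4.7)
The plan is to proceed inductively using Bruner's recursion $P_{n+1} \simeq \Omega \Sigma^{-1} P_n$, where $\Omega M = I \otimes M$ and $I = I_{\mathcal{A}(1)}$ denotes the augmentation ideal. I begin with the base case $P_1 = \widetilde{P}$: the formulas $Sq^1(t^i) = i\, t^{i+1}$ and $Sq^2(t^i) = \binom{i}{2}\, t^{i+2}$ determine the full $\mathcal{A}(1)$-action on $\widetilde{P} = t\F_2[t]$, and tabulating them through degree $8$ reproduces the displayed diagram. To confirm that $\widetilde{P}$ is already reduced (i.e.\ has no free $\mathcal{A}(1)$-summand), I compare Poincar\'e series: every free cyclic $\mathcal{A}(1)$-summand contributes two classes in some degree (from the two degree-$3$ Milnor basis elements of $\mathcal{A}(1)$), whereas $\widetilde{P}$ has exactly one $\F_2$-class per positive degree.

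For $P_2$, $P_3$, and $P_4$, I carry out the recursion explicitly. Using an $\F_2$-basis of the $7$-dimensional augmentation ideal $I$ (with classes in degrees $1,2,3,3,4,5,6$) and the Cartan formula applied to $I \otimes \Sigma^{-1} P_n$, I tabulate the $Sq^1$ and $Sq^2$ actions on every tensor basis element, then invoke Margolis's splitting to peel off all free summands; the reduced complement is $P_{n+1}$, whose diagram is matched against the one stated. The main obstacle is the combinatorial bookkeeping, since tensoring with $I$ multiplies the class count by seven, and correctly identifying the free summands depends on careful use of the Adem relations (notably $Sq^1Sq^2 = Sq^3$ and $Sq^2Sq^2 = Sq^3Sq^1$) together with tracking hidden extensions across the tensor factors.

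The periodicity $P_{n+4} = \Sigma^8 P_n$ then reduces to the classical $4$-fold Bott periodicity of $\mathcal{A}(1)$ in its stable module category, namely $\Omega^4 \simeq \Sigma^{12}$; this is a consequence of $\mathcal{A}(1)$ being a Poincar\'e-duality (Frobenius) algebra of top degree $6$ and is by now a standard fact. Applying it to the definition yields
\[
  P_{n+4} = \bigl(\Omega^{n+3}\Sigma^{-(n+3)}\widetilde{P}\bigr)^{\mathrm{red}}
  \simeq \Sigma^{12}\bigl(\Omega^{n-1}\Sigma^{-(n+3)}\widetilde{P}\bigr)^{\mathrm{red}}
  = \Sigma^{8}\bigl(\Omega^{n-1}\Sigma^{-(n-1)}\widetilde{P}\bigr)^{\mathrm{red}}
  = \Sigma^{8} P_n.
\]
Since both sides are already reduced modules, this stable isomorphism is a genuine isomorphism, closing the argument.
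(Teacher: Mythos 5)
First, a point of comparison: the paper does not prove this proposition at all --- it is stated as a paraphrase of Bruner's Theorem~4.3 and simply cited --- so your proposal is being measured against Bruner's argument rather than anything in the text. Your outline follows essentially Bruner's route (iterated syzygies of $\widetilde{P}$ together with the periodicity $\Omega^4 \simeq \Sigma^{12}$ of the stable module category of $\mathcal{A}(1)$), and the parts you actually carry out are sound: the $\mathcal{A}(1)$-structure of $\widetilde{P}$, the Poincar\'e-series argument that $\widetilde{P}$ has no free summand, and the formal derivation of $P_{n+4} = \Sigma^8 P_n$ from $\Omega^4 \simeq \Sigma^{12}$, using uniqueness in Margolis's splitting theorem to upgrade a stable isomorphism of reduced modules to a genuine one.

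The genuine gap is that the actual content of the proposition --- the explicit diagrams for $P_2$, $P_3$, $P_4$ --- is never established. You announce a tabulation of $I \otimes \Sigma^{-1}P_n$ and a matching against the stated diagrams, but do not perform it, and this step is not a routine finite check: each $P_n$ is infinite-dimensional, so ``tabulating'' requires exhibiting and proving the eventual degree-$4$-periodic pattern indicated by the dotted arrows. Moreover, for reduced $M$ the module $I \otimes M$ contains a very large free complement of the minimal syzygy (its rank is governed by $\dim M$ rather than by $\dim M/\mathcal{A}(1)^+M$), so isolating the reduced part demands an explicit change of basis or a Margolis-homology computation that you do not supply; Bruner sidesteps much of this by working with minimal free covers and the short exact sequences $0 \to \Sigma P_{n+1} \to F'_n \to P_n \to 0$. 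Two further issues: indecomposability of each $P_n$ is asserted in the statement but never argued --- ``no free summand'' is strictly weaker than ``indecomposable,'' so a separate check is required once the diagrams are in hand; and $\Omega^4 \simeq \Sigma^{12}$ is not a formal consequence of $\mathcal{A}(1)$ being a Frobenius algebra with top class in degree $6$ --- it reflects the periodicity class in $\mathrm{Ext}^{4,12}_{\mathcal{A}(1)}(\F_2,\F_2)$ and, while standard, should be cited or computed rather than attributed to Poincar\'e duality alone.
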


%%%%%%%%%%%%%%%%%%%%%%%%%%%%%%%%%%%%%%%%%%%%%%%%%%%%%%%%%%%%%%%%%%%%%%%%%%%%%%%%
\section{Reduction of The Hit Problem to Reduced Modules}
%%%%%%%%%%%%%%%%%%%%%%%%%%%%%%%%%%%%%%%%%%%%%%%%%%%%%%%%%%%%%%%%%%%%%%%%%%%%%%%%

The Hit Problem is concerned with identifying the elements of an
$\mathcal{A}$-module $M$ that fail to be in the image of any positive
square.  In other words, we are interested in the complement of the
module generated by images of positive squares.  Certainly every
element of an $\mathcal{A}$-module $M$ that lies outside of
$\mathcal{D}_{\mathbf{P}}(k)$ is not hit by $\mathcal{A}(k)$.  The
more interesting question is whether or not there are nontrivial
elements of $\mathcal{U}_{\mathbf{P}}(k)$.  In other words, to what
extent does the inclusion~(\ref{eqn.inclusion}) fail to be an
equality?  When $M$ is a free module, the question is easily
addressed.
\begin{lemma}\label{lem.free_modules}
  In a free $\mathcal{A}(k)$-module $F$, there is equality,
  $\mathcal{I}_F(k) = \mathcal{D}_F(k)$.  In other words,
  \[
    \mathcal{U}_F(k) = 0.
  \]
\end{lemma}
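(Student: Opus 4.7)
The plan is to reduce the statement to the case $F = \mathcal{A}(k)$ and then settle that case with a short degree argument. Since $F$ is free, write $F \cong \bigoplus_\alpha \Sigma^{n_\alpha} \mathcal{A}(k) \cdot e_\alpha$. Both $\mathcal{I}_F(k)$ and $\mathcal{D}_F(k)$ are defined purely in terms of the $\mathcal{A}(k)$-action, so they split compatibly along this direct sum; equality on each summand will therefore suffice, and up to a degree shift each summand is just $\mathcal{A}(k)$ itself. For $F = \mathcal{A}(k)$, the submodule $\mathcal{I}_{\mathcal{A}(k)}(k) = \mathcal{A}(k)^+ \cdot \mathcal{A}(k)$ coincides with the augmentation ideal $I$ of $\mathcal{A}(k)$.

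Proposition~\ref{prp.inclusion} already gives $I \subseteq \mathcal{D}_{\mathcal{A}(k)}(k)$, so the content is the reverse inclusion. I would fix $j \in \{0, \ldots, k\}$ and show $\ker(Sq^{2^{j+1}-1}) \subseteq I$ directly; summing over $j$ then produces $\mathcal{D}_{\mathcal{A}(k)}(k) \subseteq I$. Given such an $x$, decompose $x = c + y$ with $c \in \F_2$ and $y \in I$. Applying $Sq^{2^{j+1}-1}$ yields
\[
  Sq^{2^{j+1}-1}x \;=\; c \cdot Sq^{2^{j+1}-1} \;+\; Sq^{2^{j+1}-1} y,
\]
where the first summand sits in degree exactly $2^{j+1}-1$ and the second, because $y$ has strictly positive degree, sits in degree $> 2^{j+1}-1$. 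Extracting the degree-$(2^{j+1}-1)$ component of the equation $Sq^{2^{j+1}-1}x = 0$ forces $c \cdot Sq^{2^{j+1}-1} = 0$ in $\mathcal{A}(k)$, and since $Sq^{2^{j+1}-1}$ is a nonzero element of $\mathcal{A}(k)$ for every $j \leq k$, I conclude $c = 0$ and hence $x \in I$.

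I do not anticipate a genuine obstacle: the argument hinges on the nonvanishing of each spike square $Sq^{2^{j+1}-1}$ in $\mathcal{A}(k)$ for $j \leq k$, which is standard from the admissible-monomial structure of $\mathcal{A}(k)$. Extending the argument from the single module $\mathcal{A}(k)$ to an arbitrary free $F$ only requires tracking the degree $n_\alpha$ of each free generator $e_\alpha$ through the same degree-filtration step, a purely bookkeeping exercise.
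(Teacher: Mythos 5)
Your proof is correct and follows essentially the same route as the paper's: reduce to the single summand $F=\mathcal{A}(k)$ and observe that anything outside the augmentation ideal has a unit component, on which each spike square $Sq^{2^{j+1}-1}$ acts nontrivially. The paper phrases this as a terse contrapositive (``if $x\notin\mathcal{I}_F(k)$ then $x=1$, and $Sq^{2^{j+1}-1}\cdot 1\neq 0$''); your version merely spells out the $x=c+y$ decomposition and the lowest-degree-component argument more carefully.
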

\begin{proof}
  Since $F$ is free, it suffices to consider a single free summand, so
  assume $F = \mathcal{A}(k)$.  Suppose $x \notin \mathcal{I}_F(k)$.
  Then $x = Sq^0 = 1$.  Since for each $0 \leq i \leq k$,
  $Sq^{2^{i+1}-1}x = Sq^{2^{i+1}-1} \in \mathcal{A}(k)$, it is clear
  that $x \notin \mathcal{D}_F(k)$.
\end{proof}
\begin{corollary}\label{cor.reduced}
  For an $\mathcal{A}(k)$-module $M$, there is an isomorphism,
  \[
    \mathcal{U}_{M}(k) \cong \mathcal{U}_{M^{red}}(k).
  \]
\end{corollary}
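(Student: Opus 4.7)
The plan is to reduce the statement directly to Lemma~\ref{lem.free_modules} by using the Margolis decomposition $M \cong F \oplus M^{\mathrm{red}}$ together with the fact that both $\mathcal{I}_M(k)$ and $\mathcal{D}_M(k)$ respect direct-sum decompositions of $\mathcal{A}(k)$-modules.

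First I would observe that if $M = A \oplus B$ as $\mathcal{A}(k)$-modules, then for each $i$ the operator $Sq^{2^i}$ (and likewise $Sq^{2^{i+1}-1}$) acts diagonally, so that
\[
  \mathrm{im}(Sq^{2^i}\colon M \to M) = \mathrm{im}(Sq^{2^i}\colon A \to A) \oplus \mathrm{im}(Sq^{2^i}\colon B \to B),
\]
and similarly for kernels of the spike squares. Summing over $0 \le i \le k$ in Definition~\ref{def.I} and Definition~\ref{def.D} yields
\[
  \mathcal{I}_M(k) = \mathcal{I}_A(k) \oplus \mathcal{I}_B(k), \qquad
  \mathcal{D}_M(k) = \mathcal{D}_A(k) \oplus \mathcal{D}_B(k).
\]
Taking the quotient then gives $\mathcal{U}_M(k) \cong \mathcal{U}_A(k) \oplus \mathcal{U}_B(k)$.

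Next I would apply this to the decomposition $M \cong F \oplus M^{\mathrm{red}}$ provided by Margolis's proposition. By the splitting,
\[
  \mathcal{U}_M(k) \cong \mathcal{U}_F(k) \oplus \mathcal{U}_{M^{\mathrm{red}}}(k).
\]
Since $F$ is free, Lemma~\ref{lem.free_modules} gives $\mathcal{U}_F(k) = 0$, so $\mathcal{U}_M(k) \cong \mathcal{U}_{M^{\mathrm{red}}}(k)$, as desired.

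There is no real obstacle here: the only thing worth being careful about is the direct-sum compatibility of the image and kernel operators, which is immediate because the Steenrod squares are $\F_2$-linear maps respecting the $\mathcal{A}(k)$-module decomposition; the sum of images is the image on the sum, and the sum of kernels is the kernel on the sum component-wise, so the filtrations $\mathcal{I}_M(k)$ and $\mathcal{D}_M(k)$ split accordingly. Everything else follows formally from Lemma~\ref{lem.free_modules} and Margolis's result.
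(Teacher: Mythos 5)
Your proof is correct and is exactly the argument the paper intends (the corollary is stated without proof, immediately after Lemma~\ref{lem.free_modules}): split $M \cong F \oplus M^{\mathrm{red}}$ via Margolis, note that $\mathcal{I}_M(k)$ and $\mathcal{D}_M(k)$ decompose summand-wise, and kill the free part with the lemma. No issues.
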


%%%%%%%%%%%%%%%%%%%%%%%%%%%%%%%%%%%%%%%%%%%%%%%%%%%%%%%%%%%%%%%%%%%%%%%%%%%%%%%%
\section{The $\mathcal{A}(1)$-unhit elements of $\widetilde{P}^{\otimes n}$}
%%%%%%%%%%%%%%%%%%%%%%%%%%%%%%%%%%%%%%%%%%%%%%%%%%%%%%%%%%%%%%%%%%%%%%%%%%%%%%%%

We now turn our attention to the classical Hit Problem, in which $M =
\widetilde{P}^{\otimes n}$.  By Cor.~\ref{cor.reduced}, we have
$\mathcal{U}_{\mathbf{P}}(1)^{n,*} \cong \mathcal{U}_{P_n}(1)$.
\begin{lemma}\label{lem.hit_problem_P_n}
  For $n \geq 1$, the modules $\mathcal{U}_{P_n}(1)$ are determined as
  follows:
  \begin{itemize}
    \item $\mathcal{U}_{P_1}(1) = \Sigma^1 \F_2$, on generator $t_1$.
    \item $\mathcal{U}_{P_2}(1) = \Sigma^2 \F_2$, on generator $u'_2$.
    \item $\mathcal{U}_{P_3}(1) = \{0\}$.
    \item $\mathcal{U}_{P_4}(1) = \{0\}$.
    \item $\mathcal{U}_{P_n+4}(1) = \Sigma^8\mathcal{U}_{P_n}(1)$, for
      $n \geq 1$.
  \end{itemize}    
\end{lemma}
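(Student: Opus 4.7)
The plan is a finite, diagram-based check. Because $Sq^1$, $Sq^2$, and $Sq^3 = Sq^1 Sq^2$ all lie in $\mathcal{A}(1)$, both $\mathcal{I}_M(1)$ and $\mathcal{D}_M(1)$ depend only on the $\mathcal{A}(1)$-module structure of $M$ and commute with suspensions. Hence the identity $P_{n+4} = \Sigma^8 P_n$ from Proposition~\ref{prp.P_n_structure} at once yields $\mathcal{U}_{P_{n+4}}(1) = \Sigma^8 \mathcal{U}_{P_n}(1)$, reducing the lemma to verifying the four cases $n=1,2,3,4$.

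For each of these four modules I would compute $\mathcal{U}_{P_n}(1)$ degree by degree from the diagrams of Proposition~\ref{prp.P_n_structure}. In each degree $d$, I list a basis of $(P_n)^d$, use the arrows (together with $Sq^3 = Sq^1 Sq^2$) to identify $\ker Sq^1$ and $\ker Sq^3$ and take their sum to obtain $\mathcal{D}_{P_n}(1)^d$, then record which elements lie in $\mathrm{im}\,Sq^1$ or $\mathrm{im}\,Sq^2$ to obtain $\mathcal{I}_{P_n}(1)^d$, and pass to the quotient. For $P_1$ and $P_2$ the bottom generators $t_1$ and $u'_2$ satisfy $Sq^2 t_1 = 0$ and $Sq^1Sq^2 u'_2 = Sq^1 u_4 = 0$, so both lie in $\ker Sq^3$; each has nonzero $Sq^1$, so neither is in $\ker Sq^1$; and because $P_1$ is trivial below degree $1$ and $P_2$ below degree $2$, neither can be hit. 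By contrast, in $P_3$ the bottom generator $v'_3$ satisfies $Sq^1 v'_3 = v'_4 \neq 0$ and $Sq^3 v'_3 = Sq^1 v'_5 = v_6 \neq 0$, so no nonzero class of degree $3$ lies in $\mathcal{D}$, and similarly the minimal generator $w_7$ of $P_4$ has both $Sq^1 w_7 = w_8$ and $Sq^3 w_7 = Sq^1 w_9 = w_{10}$ nonzero.

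The main bookkeeping remaining is to check that every \emph{other} element of $\mathcal{D}_{P_n}(1)$ lies in $\mathcal{I}_{P_n}(1)$. For the asymptotic tail of each $P_n$ this reduces to an analysis inside $P_1 = \widetilde{P}$, where the formula $Sq^i(t^m) = \binom{m}{i}t^{m+i}$ makes every element of $\ker Sq^1 + \ker Sq^3$ above degree $1$ the $Sq^1$- or $Sq^2$-image of a lower element. The delicate cases are the initial degrees of $P_2$, $P_3$, and $P_4$, where the ``extra'' cells $u'_*$ and $v'_*$ must be shown to account for everything in $\mathcal{D}$ that the tail does not already hit; this local diagram chase is where I expect the main effort to lie, but it is a bounded computation.
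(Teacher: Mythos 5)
Your proposal is correct and follows exactly the route the paper intends: the published proof is simply ``easy exercise, based on the structure of $P_n$ in Prop.~\ref{prp.P_n_structure},'' i.e.\ the same reduction via $P_{n+4}=\Sigma^8 P_n$ to a finite diagram check of $n=1,2,3,4$. Your spot-checks of the bottom classes ($Sq^3t_1=0$, $Sq^3u'_2=Sq^1u_4=0$, $Sq^3v'_3=v_6\neq 0$, $Sq^3w_7=w_{10}\neq 0$) all agree with the diagrams, and the remaining bookkeeping you defer is precisely the ``easy exercise'' the author leaves to the reader.
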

\begin{proof}
  Easy exercise, based on the structure of $P_n$ indicated in
  Prop.~\ref{prp.P_n_structure}.
\end{proof}
\begin{theorem}\label{thm.hit_problem_A(1)}
  $\mathcal{U}_{\mathbf{P}}(1)$ is non-trivial only in bidegrees $(n,
  d) = (4m+r, 8m+r)$, for $m \geq 0$ and $r \in \{1, 2\}$, where
  $\mathcal{U}_{\mathbf{P}}(1)^{n,d} \cong \F_2$.  In other words, the
  generating function associated to the rank of
  $\mathcal{U}_{\mathbf{P}}(1)$ in each bidegree is:
  \[
    \sum_{n \geq 1, \,d \geq 0} \mathrm{dim}_{\F_2}(
    \mathcal{U}_{\mathbf{P}}(1)^{n,d} )s^nt^d
    = \frac{st(1 + st)}{1-s^4t^8}.
  \]
\end{theorem}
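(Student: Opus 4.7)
The plan is to assemble the theorem from three ingredients already in place: Corollary~\ref{cor.reduced}, Lemma~\ref{lem.hit_problem_P_n}, and the period-$4$ Bott isomorphism of Proposition~\ref{prp.P_n_structure}. My first step is to apply Corollary~\ref{cor.reduced} to $M = \widetilde{P}^{\otimes n}$. Combined with Bruner's decomposition $\widetilde{P}^{\otimes n} \cong P_n \oplus F_n$ and Lemma~\ref{lem.free_modules}, this gives a bigraded identification $\mathcal{U}_{\mathbf{P}}(1)^{n,*} \cong \mathcal{U}_{P_n}(1)$, reducing the computation in every arity to a question about the reduced module $P_n$.

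Next I would quote Lemma~\ref{lem.hit_problem_P_n} to handle the base cases $n = 1, 2, 3, 4$: $\mathcal{U}_{P_1}(1) = \Sigma^1 \F_2$ generated by $t_1$, $\mathcal{U}_{P_2}(1) = \Sigma^2 \F_2$ generated by $u'_2$, and $\mathcal{U}_{P_3}(1) = \mathcal{U}_{P_4}(1) = 0$. I would then propagate these four answers using the periodicity $P_{n+4} = \Sigma^8 P_n$. Since both the kernel filtration $\mathcal{D}(1)$ and the image filtration $\mathcal{I}(1)$ are defined purely via degree-shifting operators, they are manifestly compatible with the suspension $\Sigma$, and hence $\mathcal{U}_{\Sigma^8 P_n}(1) \cong \Sigma^8 \mathcal{U}_{P_n}(1)$. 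Iterating, $\mathcal{U}_{P_{4m+r}}(1) \cong \Sigma^{8m}\mathcal{U}_{P_r}(1)$ for all $m \geq 0$ and $r \in \{1,2,3,4\}$, which gives a copy of $\F_2$ exactly in bidegrees $(4m+1, 8m+1)$ and $(4m+2, 8m+2)$ and nothing else.

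Finally, I would compute the Hilbert series by summing the geometric contributions:
\[
  \sum_{m \geq 0} \bigl( s^{4m+1}t^{8m+1} + s^{4m+2}t^{8m+2} \bigr)
   = st(1+st) \sum_{m \geq 0} (s^4 t^8)^m
   = \frac{st(1+st)}{1 - s^4 t^8}.
\]
There is no serious obstacle left once the machinery is invoked: all of the difficulty has been absorbed into Bruner's structural result and into the four small diagram chases of Lemma~\ref{lem.hit_problem_P_n}. The only verification requiring any care is the identification of the $\F_2$ class in $\mathcal{U}_{P_2}(1)^{2,2}$, where one must check in the $P_2$ diagram that $u'_2$ is not in the image of $Sq^1$ or $Sq^2$ yet lies in the kernel of both $Sq^1$ and $Sq^3$; this in turn is what seeds, via the periodicity, the family of nontrivial elements of which the exotic bidegree $(5,9)$ class $\Sigma^8 u'_2$ is the first nondegenerate member.
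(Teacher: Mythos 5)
Your proposal follows exactly the paper's route: reduce via Corollary~\ref{cor.reduced} and Lemma~\ref{lem.free_modules} to $\mathcal{U}_{P_n}(1)$, quote Lemma~\ref{lem.hit_problem_P_n} for $n=1,2,3,4$, propagate by the period-$4$ suspension, and sum the geometric series; the paper's own proof is literally ``immediate from the Lemma,'' so this is the intended argument and it is correct. Two small slips in your closing paragraph are worth fixing, though. First, the check on $u'_2$ is misstated: $\mathcal{D}_M(1)=\ker Sq^1+\ker Sq^3$ is a \emph{sum} of kernels, not an intersection, and in fact $Sq^1u'_2=u'_3\neq 0$ in the $P_2$ diagram; what one actually verifies is $Sq^3u'_2=Sq^1Sq^2u'_2=Sq^1u_4=0$, so membership in $\ker Sq^3$ alone places $u'_2$ in $\mathcal{D}(1)$. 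Second, the exotic class in bidegree $(5,9)$ is $\Sigma^8t_1$ (coming from the $r=1$ family), not $\Sigma^8u'_2$, which lives in bidegree $(6,10)$.
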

\begin{proof}
  Immediate from Lemma~\ref{lem.hit_problem_P_n}.
\end{proof}
\begin{remark}
  By duality, we expect the same structure in
  $U_{\widetilde{\Gamma}}(1)$.  In particular, the nontrivial element
  $z$ bidegree $(5,9)$ found in~\cite{A2} is evidently the dual of the
  image of $\Sigma^8t_1$ under the isomorphism of reduced modules,
  $\Sigma^8P_1 \stackrel{\cong}{\to} P_5 \subset
  \widetilde{P}^{\otimes 5}$.  In fact, according to Thm.~6.1 of
  Bruner~\cite{Bruner}, the nontrivial element of
  $\mathcal{U}_{\mathbf{P}}(1)^{5,9}$ is represented (in Bruner's
  notation) by:
  \[
    \Sigma^8t_1 \mapsto \langle 22221 \rangle + \langle
    \overline{1124}1 \rangle.
  \]
\end{remark}

%%%%%%%%%%%%%%%%%%%%%%%%%%%%%%%%%%%%%%%%%%%%%%%%%%%%%%%%%%%%%%%%%%%%%%%%%%%%%%%%
\section{Ranks of spaces of the $\mathcal{A}(0)$-hit and $\mathcal{A}(1)$-hit elements}
%%%%%%%%%%%%%%%%%%%%%%%%%%%%%%%%%%%%%%%%%%%%%%%%%%%%%%%%%%%%%%%%%%%%%%%%%%%%%%%%

In the section, we find generating functions for the ranks of
$\mathcal{I}_{\mathbf{P}}(0)$ and $\mathcal{I}_{\mathbf{P}}(1)$.  The
former is present in~\cite{A2} though not explicitly, while the latter
follows from Bruner's decomposition~(\ref{eqn.Bruner_decomp}).
\begin{definition}
  For $n \geq 1$ and $k \geq 0$, define the ordinary generating
  function:
  \[
    d_{k}(s,t) = \sum_{n \geq 1, d \geq 0} \mathrm{dim}_{\F_2}(
    \mathcal{I}_{\mathbf{P}}(k)^{n,d} )s^nt^d.
  \]
\end{definition}
\begin{remark}
  Of course, $d_{k}(t)$ is simply the bigraded {\it Hilbert series},
  $H\left(\mathcal{I}_{\mathbf{P}}(k)\right)$.
\end{remark}
\begin{proposition}
  \[
    d_{0}(s,t) = \frac{st^2}{(1+t)(1-t-st)}.
  \]
\end{proposition}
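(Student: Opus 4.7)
The plan is to view $(\widetilde{P}^{\otimes n}, Sq^1)$ as a chain complex---which is legitimate because $Sq^1 Sq^1 = 0$ and, by the Cartan formula, $Sq^1$ acts on a tensor product as $Sq^1 \otimes 1 + 1 \otimes Sq^1$---and then extract the Hilbert series of its degree-$1$ image, which is precisely $\mathcal{I}_{\mathbf{P}}(0)$.

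I would begin with the one-variable computation $H^*(\widetilde{P}, Sq^1) = 0$. Indeed, $Sq^1(t^a) = a\,t^{a+1}$ modulo $2$, so both $\mathrm{ker}(Sq^1)$ in degree $d$ and $\mathrm{im}(Sq^1)$ in degree $d$ are $\F_2\langle t^d\rangle$ when $d$ is a positive even integer, and are $0$ otherwise. Applying the K\"unneth theorem over the field $\F_2$, $(\widetilde{P}^{\otimes n}, Sq^1)$ is also acyclic for every $n \geq 1$.

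Acyclicity delivers the crucial identity
\[
  \mathrm{dim}_{\F_2}(\widetilde{P}^{\otimes n})^d = \mathrm{dim}_{\F_2}\mathcal{I}_{\mathbf{P}}(0)^{n,d} + \mathrm{dim}_{\F_2}\mathcal{I}_{\mathbf{P}}(0)^{n,d+1}
\]
for all $d \geq 0$, obtained by combining rank-nullity for $Sq^1\colon (\widetilde{P}^{\otimes n})^d \to (\widetilde{P}^{\otimes n})^{d+1}$ with the equality of kernel and image in each degree. Since $\mathcal{I}_{\mathbf{P}}(0)^{n,0} = 0$, multiplying by $t^d$, summing over $d$, and using $\sum_{d \geq 0} \mathrm{dim}_{\F_2}(\widetilde{P}^{\otimes n})^d \, t^d = t^n/(1-t)^n$, I obtain
\[
  \sum_{d \geq 0} \mathrm{dim}_{\F_2}\mathcal{I}_{\mathbf{P}}(0)^{n,d} \, t^d = \frac{t}{1+t} \cdot \frac{t^n}{(1-t)^n}.
\]

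Finally I would multiply by $s^n$ and sum: the geometric series evaluates to $\sum_{n \geq 1} (st/(1-t))^n = st/(1-t-st)$, giving
\[
  d_0(s,t) = \frac{t}{1+t} \cdot \frac{st}{1-t-st} = \frac{st^2}{(1+t)(1-t-st)}.
\]
The only step that needs any thought is the acyclicity of $(\widetilde{P}, Sq^1)$, and this reduces to the one-line check above; everything else is routine generating-function algebra, so I anticipate no serious obstacles.
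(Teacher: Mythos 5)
Your proof is correct and follows essentially the same route as the paper: the key fact that kernel equals image for $Sq^1$ on $\widetilde{P}^{\otimes n}$ (i.e.\ $\mathcal{U}_{\mathbf{P}}(0)=0$) combined with rank--nullity and routine generating-function algebra. The only difference is that you establish the acyclicity directly via the K\"unneth theorem, whereas the paper cites it from the earlier work of Ault--Singer; this makes your argument slightly more self-contained but does not change the method.
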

\begin{proof}
  Since for $n \geq 1$, $\mathcal{U}_{\mathbf{P}}(0)^{n,*} = 0$, there
  is a short exact sequence,
  \begin{center}
    \begin{equation}\label{eqn.ses_d_0}
      \begin{tikzcd}
        0 \rar & \mathcal{D}_{\mathbf{P}}(0)^{n,d} \rar
        \arrow[equals]{d} & \mathbf{P}^{n,d} \rar &
        \mathcal{I}_{\mathbf{P}}(0)^{n,d+1} \rar & 0 \\
        &
        \mathcal{I}_{\mathbf{P}}(0)^{n,d} 
      \end{tikzcd}
    \end{equation}
  \end{center}
  Note that the Hilbert Series of $\mathbf{P}$ is:
  \[
    H(\mathbf{P}) = \sum_{n \geq 1} \left(\frac{t}{1-t}\right)^ns^n
    = \frac{st}{1-t-st}.
  \]
  From the direct sum of sequences~(\ref{eqn.ses_d_0}), we obtain:
  \begin{eqnarray*}
    d_0(s,t) + t^{-1} d_0(s,t) &=& \frac{st}{1-t-st} \\ d_0(s,t) &=&
    \frac{st}{(1+t^{-1})(1-t-st)} \\&=& \frac{st^2}{(t+1)(1-t-st)}.
  \end{eqnarray*}
\end{proof}
\begin{remark}
  By duality, the correct generating function for $\Delta(0)$ is equal
  to:
  \[
    H\left(\mathbf{P}/ \mathcal{I}_{\mathbf{P}}(0)\right)=
    H\left(\mathbf{P}\right) - d_{0}(s,t) = \frac{st}{(1+t)(1-t-st)}.
  \]
  This generalizes the computations of Prop.~11 of~\cite{A2}.
\end{remark}

We now move on to the analysis of $k=1$.  
Suppose $A$ is any non-negatively-graded connected algebra over a
field $k$, and $M$ is an $A$-module.  If $F \to M \to 0$ is the start
of a {\it minimal} free resolution of $M$, then there is an
isomorphism of the modules of indecomposables,
\[
  M/A^+M \cong F/A^+F,
\]
as one can verify by a diagram-chase
of~(\ref{dia.indecomp}). Surjectivity of $\pi$ is clear.  We use the
minimality of $F$ to get injectivity.
\begin{center}
  \begin{equation}\label{dia.indecomp}
    \begin{tikzcd}
      0 \rar & A^+M \rar & M \rar & M/A^+M \rar & 0\\ 
      0 \rar & A^+F \uar \rar & F \rar \arrow[two heads]{u}
      & F/A^+F \rar \uar{\pi} & 0
    \end{tikzcd}
  \end{equation}
\end{center}
Thus, since $F \cong A \otimes (F/A^+F)$ as $k$-vector space,
\begin{equation}\label{eqn.Hilbert_series}
  H(M/A^+M) = H(F/A^+F) = \frac{H(F)}{H(A)}.
\end{equation}
Due to the complexity of the computations, it is useful to define the
singly-graded Hilbert series for $n \geq 1, k \geq 0$:
\[
  d_{n,k}(t) = H\left( \mathcal{I}_{\mathbf{P}}(k)^{n,*}\right).
\]
\begin{proposition}\label{prop.d_{n,1}(t)}
  For $n \geq 1$,
  \begin{equation}\label{eqn.d_{n,1}(t)}
    d_{n,1}(t) =
    \frac{t^{n+1}(1 + t^2 - t^3 - t^6) - t^{2n+3}(1-t)^nQ_{n+1}(t)}
         {(1-t)^n(1-t^4)(1+t^3)}.
  \end{equation}
  where
  \begin{equation}\label{eqn.Q_n(t)}
    Q_n(t) = \frac{1-t}{t^{2n}}H(P_n) = \left\{ \begin{array}{ll}
      t^{-1}, & \textrm{if $n \equiv 0, 1 \mod
        4$}\\ t^{-2}+t^{-1}-1+t-t^3, & \textrm{if $n \equiv 2 \mod
        4$}\\ t^{-3}+1-t^2, & \textrm{if $n \equiv 3 \mod 4$}\\
    \end{array}\right.
  \end{equation}
\end{proposition}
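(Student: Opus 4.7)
The plan is to exploit the Bruner decomposition~(\ref{eqn.Bruner_decomp}) $\widetilde{P}^{\otimes n} \cong P_n \oplus F_n$ and apply the minimal-resolution identity~(\ref{eqn.Hilbert_series}) to each summand. Since $\mathcal{A}(1)^+$ respects direct sums,
\[
  H(\widetilde{P}^{\otimes n}/\mathcal{A}(1)^+\widetilde{P}^{\otimes n}) = H(P_n/\mathcal{A}(1)^+P_n) + H(F_n/\mathcal{A}(1)^+F_n).
\]
The free summand is easy: $F_n$ is its own minimal free cover, so $H(F_n/\mathcal{A}(1)^+F_n) = H(F_n)/H(\mathcal{A}(1))$, where $H(F_n) = H(\widetilde{P}^{\otimes n}) - H(P_n) = t^n/(1-t)^n - t^{2n}Q_n(t)/(1-t)$ with $Q_n(t)$ given by~(\ref{eqn.Q_n(t)}).

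The heart of the proof is to identify the first syzygy of $P_n$. I expect to show that if $F^{(0)} \twoheadrightarrow P_n$ is a minimal $\mathcal{A}(1)$-free cover, then $\ker(F^{(0)} \to P_n) \cong \Sigma P_{n+1}$. The argument runs through Bruner's stable isomorphism $\widetilde{P}^{\otimes(n+1)} \simeq \Omega^n\Sigma^{-n}\widetilde{P}$: re-indexing yields $\Omega P_n \simeq \Sigma P_{n+1}$ in the stable category. A Schanuel comparison of the minimal cover with the tautological cover $\mathcal{A}(1) \otimes P_n \twoheadrightarrow P_n$ (whose kernel is $\Omega P_n = I \otimes P_n$ by definition) shows that the minimal syzygy of $P_n$ is stably isomorphic to $\Omega P_n$, hence to $\Sigma P_{n+1}$; since both the minimal syzygy and $\Sigma P_{n+1}$ are reduced, this sharpens to an honest isomorphism. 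Reading $H(F^{(0)}) = H(P_n) + tH(P_{n+1})$ off the resulting short exact sequence then yields
\[
  H(P_n/\mathcal{A}(1)^+P_n) = \frac{H(P_n) + tH(P_{n+1})}{H(\mathcal{A}(1))}.
\]

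Putting the two summands back together and using $H(P_n) + H(F_n) = H(\widetilde{P}^{\otimes n})$,
\[
  H(\widetilde{P}^{\otimes n}/\mathcal{A}(1)^+\widetilde{P}^{\otimes n}) = \frac{H(\widetilde{P}^{\otimes n}) + tH(P_{n+1})}{H(\mathcal{A}(1))},
\]
and therefore
\[
  d_{n,1}(t) = H(\widetilde{P}^{\otimes n}) - \frac{H(\widetilde{P}^{\otimes n}) + tH(P_{n+1})}{H(\mathcal{A}(1))} = \frac{H(\widetilde{P}^{\otimes n})\bigl(H(\mathcal{A}(1))-1\bigr) - tH(P_{n+1})}{H(\mathcal{A}(1))}.
\]
Substituting $H(\widetilde{P}^{\otimes n}) = t^n/(1-t)^n$, the factorization $H(\mathcal{A}(1)) = (1-t^4)(1+t^3)/(1-t)$ (from which $H(\mathcal{A}(1))-1 = t(1+t^2-t^3-t^6)/(1-t)$), and $H(P_{n+1}) = t^{2n+2}Q_{n+1}(t)/(1-t)$, the common factor $(1-t)$ cancels and the formula~(\ref{eqn.d_{n,1}(t)}) emerges.

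The main obstacle will be the syzygy identification $\ker(F^{(0)} \to P_n) \cong \Sigma P_{n+1}$, particularly the step from stable to honest isomorphism. Should a slick stable-category argument elude me, the fallback is computational: read off the minimal $\mathcal{A}(1)$-generators of $P_n$ from the diagrams of Proposition~\ref{prp.P_n_structure} for $n = 1, 2, 3, 4$, verify the identity $H(P_n/\mathcal{A}(1)^+P_n) = (H(P_n)+tH(P_{n+1}))/H(\mathcal{A}(1))$ by hand in each base case, and extend to all $n \geq 1$ via the Bott periodicity $P_{n+4} = \Sigma^8 P_n$.
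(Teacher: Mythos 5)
Your proposal is correct and follows essentially the same route as the paper: decompose $\widetilde{P}^{\otimes n} \cong P_n \oplus F_n$, apply $H(M/A^+M) = H(F)/H(A)$ to a minimal free cover, and use $H(F^{(0)}) = H(P_n) + tH(P_{n+1})$ before subtracting from $H(\mathbf{P}^{n,*})$. The only difference is one of sourcing: the paper takes the short exact sequences $0 \to \Sigma P_{n+1} \to F'_n \to P_n \to 0$ and the explicit minimal covers directly from Bruner's Theorem~4.3, whereas you re-derive the syzygy identification via Schanuel's lemma applied to the tautological cover $\mathcal{A}(1)\otimes P_n \twoheadrightarrow P_n$ (with reducedness of the minimal syzygy supplied by self-injectivity of $\mathcal{A}(1)$, or by your base-case-plus-periodicity fallback, which is in effect what the citation to Bruner provides).
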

\begin{proof}
  Using~(\ref{eqn.Hilbert_series}), we obtain for any $k \geq 0$:
  \[
    H(\mathbf{P}^{n,*}/ \mathcal{I}_{\mathbf{P}}(k)^{n,*}) =
    \frac{H(F)}{H(\mathcal{A}(k))},
  \]
  where $F \to \widetilde{P}^{\otimes n} \to 0$ is the start of a
  minimal free $\mathcal{A}(k)$-resolution.  For $k=1$, Bruner
  identifies the maximal free summand of $\widetilde{P}^{\otimes n}
  \cong P_n \oplus F_n$ by its Hilbert series~\cite{Bruner}.  Set
  $f_n(t) = H(F_n)/H(\mathcal{A}(1))$.  Then,
  \begin{equation}\label{eqn.f_n(t)}
    f_n(t) =
    \frac{t^n\left(1-t^n(1-t)^{n-1}Q_n(t)\right)}{(1-t)^{n-1}(1-t^4)(1+t^3)},
  \end{equation}
  Bruner also provides a minimal free resolution for each of the
  $P_n$.  We are interested only in the start of each resolution,
  which we denote $F'_n \to P_n$.  (Note, in~\cite{Bruner}, the
  notation $F_n$ is again used for this purpose, though this is quite
  different than the $F_n$ used in the isomorphism
  $\widetilde{P}^{\otimes n} \cong P_n \oplus F_n$.)  To summarize,
  \begin{eqnarray*}
    F'_1 &\cong& \Sigma\mathcal{A}(1) \oplus \bigoplus_{i \geq 0}
    \Sigma^{4i+3} \mathcal{A}(1)\\ F'_2 &\cong& \Sigma^2\mathcal{A}(1)
    \oplus \bigoplus_{i \geq 0} \Sigma^{4i+3} \mathcal{A}(1)\\ F'_3
    &\cong& \bigoplus_{i \geq 0} \Sigma^{4i+3} \mathcal{A}(1)\\ F'_4
    &\cong& \bigoplus_{i \geq 0} \Sigma^{4i+7}
    \mathcal{A}(1)\\ F'_{4q+r} &\cong& \Sigma^{8q}F'_{r}
  \end{eqnarray*}    
  Set $g_n(t) = H(F'_n)/H(\mathcal{A}(1))$.  It is convenient to
  define $g_0(t) = t^{-8}g_4(t)$.  While the above descriptions of
  $F'_n$ can be used to produce Hilbert series for each, it will
  clarify the argument to relate $g_n(t)$ the polynomials $Q_n(t)$.
  (By the way, these are the same $Q_n(t)$ as in~\cite{Bruner}.)  In
  the proof of Theorem~4.3 of~\cite{Bruner}, four short exact seqences
  are presented connecting the syzygies $P_n$ and $P_{n+1}$ through
  $F'_n$, for $n = 1, 2, 3, 4$.
  \[
    0 \longrightarrow \Sigma P_{n+1} \longrightarrow F'_n
    \longrightarrow P_n \longrightarrow 0
  \]
  Applying $\Sigma^{-8}$ to the s.e.s.~corresponding to $n=4$, we
  obtain a similar sequence for $n=0$.  Thus, for each $n = 0, 1, 2,
  3$,
  \begin{eqnarray*}
    H(F'_n) &=& H(P_n) + tH(P_{n+1}) \\ &=& \frac{t^{2n}}{1-t} Q_n(t)
    + \frac{t^{2n + 3}}{1-t} Q_{n+1}(t)\\ &=& \frac{t^{2n}}{1-t}\left(
    Q_n(t) + t^3 Q_{n+1}(t) \right).\\
  \end{eqnarray*}
  Therefore, for $n = 4q + r$ with $0 \leq r < 4$,
  \begin{eqnarray*}
    g_n(t) &=& \frac{t^{8q}H(F'_r)}{H(\mathcal{A}(1))}\\ &=&
    \frac{t^{8q+2r}\left( Q_n(t) + t^3 Q_{n+1}(t)
      \right)}{(1-t)(1+t)(1+t^2)(1+t^3)}\\ &=& \frac{t^{2n}\left(
      Q_n(t) + t^3 Q_{n+1}(t) \right)}{(1-t^4)(1+t^3)}.
  \end{eqnarray*}

  Observe that $(F_n \oplus F'_n) \to \widetilde{P}^{\otimes n}$ is
  the start of a minimal free resolution, hence
  \begin{eqnarray*}
    H(\mathbf{P}^{n,*}/ \mathcal{I}_{\mathbf{P}}(1)^{n,*}) &=& f_n(t)
    + g_n(t)\\ &=& \frac{ t^n\left(1 -
      t^{n}(1-t)^{n-1}Q_n(t)\right)}{(1-t)^{n-1}(1-t^4)(1+t^3)} +
    \frac{t^{2n}\left(Q_n(t) + t^3
      Q_{n+1}(t)\right)}{(1-t^4)(1+t^3)}\\ &=& \frac{ t^n +
      t^{2n+3}(1-t)^{n-1}Q_{n+1}(t)}{(1-t)^{n-1}(1-t^4)(1+t^3)}.
  \end{eqnarray*}
  Formula~(\ref{eqn.d_{n,1}(t)}) results from the identification,
  \[
    H(\mathcal{I}_{\mathbf{P}}(1)^{n,*}) = H(\mathbf{P}^{n,*}) -
    H(\mathbf{P}^{n,*}/ \mathcal{I}_{\mathbf{P}}(1)^{n,*}).
  \]
  \begin{eqnarray*}
    d_{n,1}(t) &=& \frac{t^n}{(1-t)^n} - \frac{ t^n +
      t^{2n+3}(1-t)^{n-1}Q_{n+1}(t)}{(1-t)^{n-1}(1-t^4)(1+t^3)}\\ &=&
    \frac{t^{n+1}(1 + t^2 - t^3 - t^6) - t^{2n+3}(1-t)^nQ_{n+1}(t)}
         {(1-t)^n(1-t^4)(1+t^3)}.
  \end{eqnarray*}
\end{proof}
\begin{remark}
  By duality,
  \[
    H(\Delta(1)_{n,*}) = H(\mathbf{P}^{n,*}/
    \mathcal{I}_{\mathbf{P}}(1)^{n,*}) = \frac{ t^n +
      t^{2n+3}(1-t)^{n-1}Q_{n+1}(t)}{(1-t)^{n-1}(1-t^4)(1+t^3)}.
  \]
\end{remark}
Using the result of Prop.~\ref{prop.d_{n,1}(t)}, the bigraded Hilbert
series $d_1(s,t)$ can be found.
\[
  d_1(s,t) = \frac{st^2\left[R_1(s,t) -
      R_2(s,t)\right]}{(1-t^4)(1+t^3)},
\]
where
\begin{eqnarray*}
  R_1(s,t) &=& \frac{1+t^2-t^3-t^6}{1-t-st}\\ R_2(s,t) &=&
  \frac{t+t^2-t^3+t^4-t^6+s(t^2+t^5-t^7) + s^2t^6 + s^3t^8}{1-s^4t^8}.
\end{eqnarray*}

%%%%%%%%%%%%%%%%%%%%%%%%%%%%%%%%%%%%%%%%%%%%%%%%%%%%%%%%%%%%%%%%%%%%%%%%%%%%%%%%
\section{Conclusion}
%%%%%%%%%%%%%%%%%%%%%%%%%%%%%%%%%%%%%%%%%%%%%%%%%%%%%%%%%%%%%%%%%%%%%%%%%%%%%%%%

We believe, in principle, that similar methods could lead to a full
analysis of $\mathcal{U}_{\mathbf{P}}(k)$, for $k > 1$, as long as
nice decompositions of $\widetilde{P}^{\otimes n}$ into free
$\mathcal{A}(k)$-module and periodic complement can be constructed.
However the difficulty of such a task must be monumental, as
$\mathcal{A}(k)$ becomes much larger and harder to understand with
increasing $k$.  The $\F_2$-dimension of $\mathcal{A}(2)$ is $64$, so
creating such nice diagrams as Bruner has done for
$\mathcal{A}(1)$-resolutions~\cite{Bruner_diagrams} would be out of
reach.

%%%%%%%%%%%%%%%%%%%%%%%%%%%%%%%%%%%%%%%%%%%%%%%%%%%%%%%%%%%%%%%%%%%%%%%%%%%%%%%%

\bibliographystyle{plain}

%\bibliography{/media/57D6-E8C0/ault/Documents/Papers/updated_refs/refs}

\end{document}